\title{An example of birationally inequivalent projective symplectic varieties which are D-equivalent and L-equivalent}
\author{Shinnosuke Okawa}
\address{
Department of Mathematics,
Graduate School of Science,
Osaka University,
Machikaneyama 1-1,
Toyonaka,
Osaka,
560-0043,
Japan.
}
\email{okawa@math.sci.osaka-u.ac.jp}
\begin{document}

\maketitle

\begin{abstract}
We give an example of a pair of projective symplectic varieties in arbitrarily large dimensions which are D-equivalent, L-equivalent, and birationally inequivalent.
\end{abstract}

%
%
%

%
%
%
\section{Introduction}\label{sc:Introduction}

It is widely believed that the bounded derived category of coherent sheaves
$
 \bfD \lb X \rb = \bfD ^{ b } \coh X
$ is a fundamental invariant of a smooth projective variety $X$. It is hence natural to ask which kind of information of the variety $X$ can be regained from the triangulated category $\bfD \lb X \rb$.

The \emph{Grothendieck ring of varieties},
which will be denoted by
$
 K _{ 0 } \lb \Var \rb
$
in this paper, is the quotient of the free abelian group generated by the set of
isomorphism classes of schemes of finite type over the fixed base field
$
 \bfk
$
modulo the relations
\begin{equation}
 [ X ] = [ X \setminus Z ] + [ Z ]
\end{equation}
for closed embeddings
$
 Z \subset X.
$
Multiplication in
$
 K _{ 0 } \lb \Var \rb
$
is defined by the Cartesian product,
which is easily seen to be associative, commutative, and unital with
$
 1 = [ \Spec \bfk ].
$
The \emph{localized Grothendieck ring of varieties}
$
 K _{ 0 } \lb \Var \rb [ \bL ^{ - 1 } ]
$
is the localization of
$
 K _{ 0 } \lb \Var \rb
$
by the class
$
 \bL = [ \bA ^{ 1 } ]
$
of the affine line.

A pair
$
 \lb X, Y \rb
$
of smooth projective varieties are said to be D-equivalent if they have equivalent derived categories. Similarly, they are said to be L-equivalent if they satisfy the following equivalent conditions.
\begin{align}
 [ X ] = [ Y ] \in K _{ 0 } \lb \Var \rb [ \bL ^{ - 1 } ]
 \iff
 \bL ^{ m } \cdot \lb [ X ] - [ Y ] \rb = 0 \in K _{ 0 } \lb \Var \rb
 \quad \exists m \in \bN
\end{align}

It is asked independently in the first preprint versions of \cite{Kuznetsov2017} and \cite{Ito:2016ac} if D-equivalence should imply L-equivalence. This is motivated by the first such example found in \cite{Borisov:2014aa} \cite{MR3535349} and the other examples discovered in \cite{Ito:2016ab} (the D-equivalence is shown later in\cite{Kuznetsov:2016ab}), \cite{Kuznetsov2017}, and \cite{Hassett:2016aa} \cite{Ito:2016ac}. In addition, more supporting evidences have been discovered in the works \cite{Borisov:2017aa}, \cite{Kapustka:2017aa}, \cite{Manivel:2017aa}, and \cite{Kapustka:2017ab}. In fact, all known examples are pairs of simply connected Calabi-Yau varieties with
$
 h ^{ 2, 0 } = 0
$
or K3 surfaces. On the other hand, it is shown in \cite{Ito:2016ac} and \cite{Efimov:2017aa} that the pair of an abelian variety of dimension at least two and its dual is a counter-example to the question as soon as the endomorphism ring of the abelian variety is isomorphic to
$
 \bZ
$. Taking these counter-examples into account, the simply-connectedness is assumed in \cite[Conjecture 1.6]{Kuznetsov2017}. \cite[Section 7]{Ito:2016ac} instead proposes to modify the Grothendieck ring of varieties suitably.

The aim of this paper is to give a first example of a pair of projective symplectic varieties which are both D-equivalent, L-equivalent, and birationally inequivalent in arbitrarily large dimension. Let
$
 X
$
be a projective K3 surface. It is well known by \cite{10.2307/2373734} that the Hilbert scheme
$
 X ^{ [ n ] }
$
of $n$ points on $X$, which very roughly is described as
\begin{align}
 X ^{ [ n ] } =
 \lc I \subset \cO _{ X } \mid \dim _{ \bfk } \cO _{ X } / I = n \rc,
\end{align}
is a smooth projective symplectic variety of dimension $2 n$.
It is simply connected and satisfies
$
 h ^{ 2, 0 } \lb X \rb = 1
$. Below is the main result of this article.

\begin{theorem}\label{th:main}
Let
$
 \lb X, Y \rb
$
be a non-isomorphic pair of K3 surfaces of Picard number 1 and of degree
$
 2 d _{ X }, 2 d _{ Y }
$
respectively which are both D-equivalent and L-equivalent. Then
$
 X ^{ [ n ] }
$
and
$
 Y ^{ [ n ] }
$
are D-equivalent, L-equivalent, and birationally inequivalent if either
\begin{enumerate}[(1)]
\item
$
 d _{ X } \neq d _{ Y }
$
or

\item\label{it:equality}
$
 d _{ X } = d _{ Y }
$,
$
 n >2
$,
and there exists an integer solution to the following Pell's equation.
\begin{align}\label{eq:Pell's(b)}
 \lb n - 1 \rb X ^{ 2 } - d _{ X } Y ^{ 2 } = 1
\end{align}
\end{enumerate}
\end{theorem}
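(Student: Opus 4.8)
The plan is to verify the three conclusions separately; the birational inequivalence is the heart of the matter. For \emph{D-equivalence}: a Fourier--Mukai equivalence $\bfD(X)\simeq\bfD(Y)$ of the K3 surfaces upgrades to the Hilbert schemes in a standard way. By Bridgeland--King--Reid the crepant resolution $X^{[n]}\to\mathrm{Sym}^nX$ yields an equivalence $\bfD(X^{[n]})\simeq\bfD_{\mathfrak{S}_n}(X^n)$ with the $\mathfrak{S}_n$-equivariant derived category, and if $\mathcal{E}\in\bfD(X\times Y)$ is a kernel realizing $\bfD(X)\simeq\bfD(Y)$, then its $n$-fold external tensor power on $X^n\times Y^n$ carries a tautological $\mathfrak{S}_n$-linearization and induces an equivalence $\bfD_{\mathfrak{S}_n}(X^n)\simeq\bfD_{\mathfrak{S}_n}(Y^n)$, whence $\bfD(X^{[n]})\simeq\bfD(Y^{[n]})$ (Ploog). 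This uses only the D-equivalence of $X$ and $Y$ and holds for every $n$.

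For \emph{L-equivalence} I would prove the sharper statement that $[S^{[n]}]$ depends, in $K_0(\Var)[\bL^{-1}]$, only on $[S]$. The motivic form of Göttsche's formula,
\[
 \sum_{n\geq0}[S^{[n]}]\,t^n=\prod_{k\geq1}Z_S(\bL^{k-1}t^k),\qquad Z_S(t):=\sum_{m\geq0}[\mathrm{Sym}^mS]\,t^m ,
\]
is an identity in $1+t\,K_0(\Var)[[t]]$ whose coefficient of $t^n$ is a finite $\bZ$-polynomial in $\bL$ and the symmetric powers $[\mathrm{Sym}^mS]$, $m\leq n$. Stratifying symmetric products of a surface by a closed subset and its complement shows $S\mapsto Z_S(t)$ is a group homomorphism $(K_0(\Var),+)\to(1+t\,K_0(\Var)[[t]],\times)$; since $Z_{\bA^1}(t)=(1-\bL t)^{-1}$ is a unit, this homomorphism descends through $K_0(\Var)\to K_0(\Var)[\bL^{-1}]$. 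Hence $[X]=[Y]$ in $K_0(\Var)[\bL^{-1}]$ forces $[\mathrm{Sym}^mX]=[\mathrm{Sym}^mY]$ for all $m$, and therefore $[X^{[n]}]=[Y^{[n]}]$; so L-equivalence is inherited by the Hilbert schemes, again for every $n$.

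For \emph{birational inequivalence}, assume $X^{[n]}$ and $Y^{[n]}$ are birational. Being projective hyperk\"ahler manifolds of $K3^{[n]}$-type, a birational map between them is an isomorphism in codimension two and induces a Hodge isometry $\iota\colon(H^2(X^{[n]},\bZ),q)\xrightarrow{\sim}(H^2(Y^{[n]},\bZ),q)$ of Beauville--Bogomolov--Fujiki lattices, which by the Torelli theorem of Verbitsky and Markman can be taken to be a parallel transport operator. Here $H^2(X^{[n]},\bZ)=H^2(X,\bZ)\oplus\bZ\delta_X$ with $\delta_X\perp H^2(X,\bZ)$ and $q(\delta_X)=-2(n-1)$; as $\rho(X)=1$, the Picard lattice is $\mathrm{Pic}(X^{[n]})=\bZ H_X\oplus\bZ\delta_X$ with Gram matrix $\mathrm{diag}(2d_X,-2(n-1))$, likewise for $Y$, and $\iota$ restricts to an isometry between these Picard lattices. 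In case~(1) this is already impossible, since the two lattices have discriminant groups of different orders $4d_X(n-1)\neq 4d_Y(n-1)$. (This case is in fact vacuous: by Orlov's theorem D-equivalent K3 surfaces have Hodge-isometric transcendental lattices, forcing $d_X=d_Y$ when $\rho=1$; I would retain it only for logical completeness.) For case~(2) with $d_X=d_Y=:d$, the Picard lattices are isometric and the obstruction is finer: using Markman's embedding of the $H^2$ of a $K3^{[n]}$-type manifold into the Mukai lattice of a K3 surface, one has, by Markman and Bayer--Macr\`i, that $X^{[n]}$ is birational to $Y^{[n]}$ if and only if there is a Hodge isometry $g\colon\widetilde H(X,\bZ)\xrightarrow{\sim}\widetilde H(Y,\bZ)$ taking the Mukai vector $v_n=(1,0,1-n)$ of $X^{[n]}$ to that of $Y^{[n]}$. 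Since $X$ and $Y$ are D-equivalent, Orlov's theorem provides a Hodge isometry $\phi\colon\widetilde H(X,\bZ)\to\widetilde H(Y,\bZ)$ with $\phi(v_0)=(0,0,1)$, for a primitive isotropic Mukai vector $v_0=(r,cH_X,s)$ realizing $Y\cong M_X(v_0)$; the criterion becomes the requirement that $w:=\phi^{-1}\big((1,0,1-n)\big)$ lie in the orbit of $v_n$ under the group of Hodge autoisometries of $\widetilde H(X,\bZ)$.

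From here the argument is lattice-theoretic, and this is where I expect the real difficulty. One computes $w$ explicitly: a primitive class of square $2(n-1)$ with $\langle w,v_0\rangle=-1$, lying in the algebraic part $\mathrm{NS}\,\widetilde H(X,\bZ)\cong\langle2d\rangle\oplus U$. After checking $\mathrm{Aut}_{\mathrm{Hdg}}T(X)=\{\pm1\}$ for the K3 surfaces in question, the Hodge autoisometries of $\widetilde H(X,\bZ)$ act on $\mathrm{NS}\,\widetilde H(X,\bZ)$ precisely through the isometries of $\langle2d\rangle\oplus U$ inducing $\pm1$ on the discriminant $\bZ/2d$. By Eichler's criterion $v_n$ and $w$ are equivalent under the \emph{full} orthogonal group, so the obstruction to their equivalence under this subgroup is a class in $\{\pm1\}\backslash O(\bZ/2d)$; the point is that hypothesis \eqref{eq:Pell's(b)} — which via $q(a\delta_X+bH_X)=-2(n-1)a^2+2db^2=-2$ produces a $(-2)$-class in $\mathrm{Pic}(X^{[n]})$, equivalently a spherical class orthogonal to $v_n$ inside $\mathrm{NS}\,\widetilde H(X,\bZ)$ — is exactly what makes that class non-trivial, so that no such $g$ exists and $X^{[n]}$ and $Y^{[n]}$ are not birational. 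The conditions $n>2$ and the solvability of \eqref{eq:Pell's(b)} enter only at this stage. The main obstacle is thus this discriminant computation, comparing how $w$ (built from the Fourier--Mukai vector $v_0$) and $v_n$ sit inside $\mathrm{NS}\,\widetilde H(X,\bZ)$ once the spherical class forced by the Pell equation is present, together with the careful bookkeeping of the Torelli and monodromy theorems and the verification that $\mathrm{Aut}_{\mathrm{Hdg}}T(X)=\{\pm1\}$.
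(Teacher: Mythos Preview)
Your D-equivalence argument matches the paper's (Ploog via BKR). For L-equivalence the idea is the same, but your descent step is shaky: that $Z_{\bA^1}(t)$ is a unit in the target group does not by itself factor the additive-to-multiplicative homomorphism $[S]\mapsto Z_S(t)$ through the \emph{multiplicative} localization $K_0(\Var)\to K_0(\Var)[\bL^{-1}]$. The paper instead invokes the power-structure identity $\bH_Z(T)=\big(\bH_{\bA^{\dim Z}}(T)\big)^{\bL^{-\dim Z}[Z]}$ of Gusein-Zade--Luengo--Melle-Hern\'andez, which is already an equality in $K_0(\Var)[\bL^{-1}][[T]]$ and gives $\bH_X=\bH_Y$ directly.

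The real divergence, and the real gap, is in the birational inequivalence. You correctly note that case~(1) is vacuous once D-equivalence of $X$ and $Y$ is assumed (Orlov forces $d_X=d_Y$ when $\rho=1$); the paper does not remark on this, and its Proposition~2.2 is stated without the D-equivalence hypothesis, where case~(1) is genuine. For case~(2) you set up the Markman/Bayer--Macr\`i birationality criterion via Hodge isometries of Mukai lattices carrying $(1,0,1-n)$ to $(1,0,1-n)$, reduce to an orbit problem in $\langle 2d\rangle\oplus U$ under isometries acting by $\pm 1$ on the discriminant, and then stop --- you yourself flag the remaining discriminant computation as ``the main obstacle.'' As written, the proof is incomplete here.

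The paper's route is different and bypasses that obstacle entirely. It uses the explicit movable cone of $X^{[n]}$ from Bayer--Macr\`i: one extremal ray is the Hilbert--Chow class $\Htilde$, and under the Pell hypothesis (their case~(b)) the other is spanned by the primitive vector $x_1(n-1)\Htilde - d_X y_1 B$, where $(x_1,y_1)$ is the minimal positive solution of \eqref{eq:Pell's(b)}. A birational map $X^{[n]}\dashrightarrow Y^{[n]}$ induces an isometry of Picard lattices preserving movable cones, so $\Htilde_Y$ must go to one of these two primitive generators. If to $\Htilde$, the map respects the Hilbert--Chow contractions and, by a result of Beauville, comes from an isomorphism $X\simeq Y$, a contradiction. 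If to the other ray, one simply computes its BBF square:
\[
q_X\big(x_1(n-1)\Htilde - d_X y_1 B\big)=2d_X(n-1)\big((n-1)x_1^2 - d_X y_1^2\big)=2d_X(n-1),
\]
which must equal $q_Y(\Htilde_Y)=2d_Y=2d_X$, forcing $n=2$ and contradicting $n>2$. Thus the Pell equation is used only to place us in case~(b) of the movable-cone trichotomy and to evaluate one square; no orbit analysis on the Mukai lattice is needed. Your approach may well go through, but the paper's is both shorter and complete.
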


\begin{remark}
It follows either from \cite[Theorem 4.1]{Hassett:2016aa} or \cite[Theorem 1.3]{Ito:2016ac} that if $X$ is a very general K3 surface of degree 12 (i.e., $d _{ X } = 6$) and
$Y$ is \emph{the} Fourier-Mukai partner of $X$ ($d _{ Y } = 6$), then
$
 X
$
and
$
 Y
$
are L-equivalent. Hence, e.g., if
$
 n = 6 y ^{ 2 } + 2
$
for some positive integer $y$, the assumption (\pref{it:equality}) is satisfied by the obvious solution
$
 \lb X, Y \rb = \lb 1, y \rb
$.
Hence we do have an example as in \pref{th:main} for
$
 n = 6 y ^{ 2 } + 2 \lb y = 1, 2, \dots \rb
 = 8, 26, 56, \dots
$.

In fact, as we mention in the next remark, one can determine if
$
 X ^{ [ n ] }
$
and
$
 Y ^{ [ n ] }
$
are birationally equivalent or not by checking the existence of solutions to certain set of Pell's equation. Hence for those $n$ where the birationality holds,
we do not have an example in dimension $2 n$ yet.
Note, however, if there is a Fourier-Mukai pair
$
 ( X, Y )
$
of K3 surfaces of Picard number one which are L-equivalent and
$
 d _{ X } \neq d _{ Y }
$,
then by \pref{th:main} we can construct examples for every $n$.
\end{remark}

\begin{remark}\label{rm:finer}
In the recent preprint \cite{Meachan:2018aa}, the authors gave a criterion for the birationality of the Hilbert schemes of points
$
 X ^{ [ n ] }
$
and
$
 Y ^{ [ n ] }
$
on K3 surfaces
$
 X, Y
$
of Picard number one. In \cite[Proposition 1.2]{Meachan:2018aa} they apply the criterion to the pair
$
 \lb X, Y \rb
$
which appeared in the previous remark, to construct examples of pairs of Hilbert schemes of points
which are D-equivalent and birationally inequivalent.
They in particular show that
$
 X ^{ [ n ] }
$
and
$
 Y ^{ [ n ] }
$
are birationally equivalent to each other for some $n$, starting with
$
 n = 2, 3, 4, 10, 12, 14, 15, 18, 20, \dots
$.
In fact, they are even isomorphic to each other for
$
 n = 2
$
and
$
 3
$
as shown in \cite[Example 7.2]{MR1872531}.


\end{remark}

The proof of \pref{th:main} is given in the next section. In fact the D-equivalence is nothing but \cite[Proposition 8]{MR2353249}, which in turn is an application of \cite{Bridgeland-King-Reid}.
The L-equivalence immediately follows from the description of the generating series of the Hilbert scheme of points on a smooth quasi-projective variety $Z$ as the $ ( \bL ^{ - \dim Z } [ Z ] )$-th (!) power of the generating series for $ \bA ^{ \dim X }$ due to \cite{gusein-zade2006}. In order to show the birational inequivalence of
$
 X ^{ [ n ] }
$
and
$
 Y ^{ [ n ] }
$,
we use the very detailed description of the movable cone of
$
 X ^{ [ n ] }
$
due to \cite{MR3279532}. This is the only place where we use the assumption on the Picard number and the degrees, and will be discussed in \pref{pr:main_prop}.

Throughout this paper the base field $\bfk$ will be the field of complex numbers
$
 \bC
$. Varieties are always assumed to be connected.

\begin{acknowledgements*}
The author is indebted to Kota Yoshioka, Michal Kapustka, and Grzegorz Kapustka for pointing out a crucial error in Proposition 2.2 of the first draft and informing him of references. He is also greatful to Kota Yoshioka for sending the preprint \cite{Meachan:2018aa} to the author.
The author was partially supported by Grants-in-Aid for Scientific Research
(16H05994,
16K13746,
16H02141,
16K13743,
16K13755,
16H06337)
and the Inamori Foundation.
\end{acknowledgements*}

\section{Proof of \pref{th:main}}

Let
$
 \lb X, Y \rb
$
be a pair of smooth projective surfaces which are both D-equivalent and
L-equivalent. Then
$
 \lb X ^{ [ n ] }, Y ^{ [ n ] } \rb
$
is a pair of smooth projective $2 n$-folds by \cite{MR0237496}, which are also D-equivalent by \cite[Proposition 8]{MR2353249}.
On the other hand, one can show the L-equivalence as follows. This is essentially due to
\cite{gusein-zade2006}, and it applies to smooth quasi-projective varieties of arbitrary dimension.
In the proof we consider the generating series of the Hilbert scheme, which is defined for an arbitrary variety $Z$ as follows.
\begin{align}
 \bH _{ Z } ( T ) \coloneqq \sum _{ n = 0 } ^{ \infty } \ld Z ^{ [ n ] } \rd T ^{ n }
 = 1 + [ Z ] T + \cdots
 \in
 1 + T \cdot K _{ 0 } \lb \Var \rb \ldd T \rdd.
\end{align}

\begin{lemma}\label{lm:main}
Let
$
 \lb X, Y \rb
$
be a pair of L-equivalent smooth quasi-projective varieties. Then
$
 \lb X ^{ [ n ] }, Y ^{ [ n ] } \rb
$
also is a pair of L-equivalent varieties.
\end{lemma}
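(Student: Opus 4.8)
The plan is to deduce the statement from the theorem of \cite{gusein-zade2006} on the motivic generating series of Hilbert schemes of points, by means of the power structure on the Grothendieck ring of varieties. Recall that a \emph{power structure} over a commutative ring $R$ assigns to a series $A ( T ) \in 1 + T \cdot R \ldd T \rdd$ and an element $m \in R$ a series $A ( T ) ^{ m } \in 1 + T \cdot R \ldd T \rdd$, subject to the usual exponential identities, and that on $K _{ 0 } \lb \Var \rb$ there is a canonical one, for which the $[ V ]$-th power of $( 1 - T ) ^{ - 1 }$ is the generating series $\sum _{ n \geq 0 } [ \mathrm{Sym} ^{ n } V ] T ^{ n }$ of symmetric products of a variety $V$; this power structure induces a compatible one on every $K _{ 0 } \lb \Var \rb$-algebra, in particular on $K _{ 0 } \lb \Var \rb [ \bL ^{ - 1 } ]$. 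In this language the cited result states that, for a smooth quasi-projective variety $Z$ of dimension $d$,
\begin{align}
 \bH _{ Z } ( T ) = \lb \bH _{ \bA ^{ d } } ( T ) \rb ^{ \bL ^{ - d } [ Z ] }
 \qquad \text{in } 1 + T \cdot K _{ 0 } \lb \Var \rb [ \bL ^{ - 1 } ] \ldd T \rdd .
\end{align}
The key observation is that the right-hand side depends on $Z$ only through the integer $d = \dim Z$ and through the image of $[ Z ]$ in $K _{ 0 } \lb \Var \rb [ \bL ^{ - 1 } ]$, because in a power structure $A ( T ) ^{ m }$ is by construction a function of the exponent $m$.

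Next I would record that L-equivalent smooth varieties have equal dimension. Indeed, the Hodge--Deligne $E$-polynomial is a ring homomorphism $e \colon K _{ 0 } \lb \Var \rb \to \bZ [ u, v ]$ with $e ( \bL ) = u v$; applying it to a relation $\bL ^{ m } \lb [ X ] - [ Y ] \rb = 0$ gives $( u v ) ^{ m } \lb e ( X ) - e ( Y ) \rb = 0$, hence $e ( X ) = e ( Y )$ since $\bZ [ u, v ]$ is an integral domain, and for a smooth variety the total degree of $e$ equals twice the dimension by Poincar\'e duality. Thus $\dim X = \dim Y$; write $d$ for this common value.

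Finally I would substitute the equality $[ X ] = [ Y ]$ in $K _{ 0 } \lb \Var \rb [ \bL ^{ - 1 } ]$ into the formula above and obtain
\begin{align}
 \bH _{ X } ( T ) = \lb \bH _{ \bA ^{ d } } ( T ) \rb ^{ \bL ^{ - d } [ X ] } = \lb \bH _{ \bA ^{ d } } ( T ) \rb ^{ \bL ^{ - d } [ Y ] } = \bH _{ Y } ( T )
\end{align}
in $1 + T \cdot K _{ 0 } \lb \Var \rb [ \bL ^{ - 1 } ] \ldd T \rdd$; comparing coefficients of $T ^{ n }$ then yields $[ X ^{ [ n ] } ] = [ Y ^{ [ n ] } ]$ in $K _{ 0 } \lb \Var \rb [ \bL ^{ - 1 } ]$ for every $n$, which is precisely the L-equivalence of $X ^{ [ n ] }$ and $Y ^{ [ n ] }$. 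The load-bearing step, and the only substantive one, is the theorem of \cite{gusein-zade2006}, which I take as given; granting it, the argument presents no real obstacle, and the only remaining points deserving attention are the dimension-equality remark above and the routine verification that the power structure of \cite{gusein-zade2006} on $K _{ 0 } \lb \Var \rb$ extends to the localization $K _{ 0 } \lb \Var \rb [ \bL ^{ - 1 } ]$, so that the exponent $\bL ^{ - d } [ Z ]$ is meaningful and the reformulation of their formula displayed above (stated there with punctual Hilbert schemes and exponent $[ Z ] \in K _{ 0 } \lb \Var \rb$) is justified.
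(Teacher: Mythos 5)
Your proof is correct and follows essentially the same route as the paper: both reduce the statement to the power-structure formula $\bH _{ Z } ( T ) = \lb \bH _{ \bA ^{ \dim Z } } ( T ) \rb ^{ \bL ^{ - \dim Z } [ Z ] }$ of \cite{gusein-zade2006} and then substitute $[ X ] = [ Y ]$ in $K _{ 0 } \lb \Var \rb [ \bL ^{ - 1 } ]$ after checking $\dim X = \dim Y$. The only (immaterial) difference is that you obtain the equality of dimensions via the Hodge--Deligne $E$-polynomial, whereas the paper deduces it from the well-definedness of the dimension of the primitive elements $\bL ^{ m } [ X ] = \bL ^{ m } [ Y ]$ in the sense of \cite[Definition 1.1]{MR2004431}.
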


\begin{proof}
%
Let
$
 m
$
be a natural number such that
\begin{align}
 \bL ^{ m } \cdot \lb [ X ] - [ Y ] \rb = 0
 \iff
 \bL ^{ m } \cdot [ X ] = \bL ^{ m } \cdot [ Y ] \in K _{ 0 } \lb \Var \rb.
\end{align}
Since both
$
 \bL ^{ m } \cdot [ X ]
$
and
$
 \bL ^{ m } \cdot [ Y ]
$
are primitive elements in the sense of \cite[Definition 1.1]{MR2004431},
their dimensions are well-defined and the same. Hence we see
$
 \dim X = \dim Y
$.

By \cite[COROLLARY]{gusein-zade2006}, for any smooth quasi-projective variety
$
 Z
$
there is an equality
\begin{align}
 \bH _{ Z } ( T ) =
 \lb \bH _{ \bA ^{ \dim Z } } ( T ) \rb ^{ \bL ^{ - \dim Z } \ld Z \rd }
 \in K _{ 0 } \lb \Var \rb \ld \bL ^{ - 1 } \rd \ldd T \rdd
\end{align}
(see \cite{gusein-zade2006} and references therein for the notion of the power structure of
$
 K _{ 0 } \lb \Var \rb
$).
Combining it with the assumption
\begin{align}\label{eq:L-equivalence}
 \ld X \rd = \ld Y \rd \in K _{ 0 } \lb \Var \rb \ld \bL ^{ - 1 } \rd,
\end{align}
we obtain the following sequence of equalities.
\begin{align}
\begin{autobreak}
 \bH _{ X } ( T )
 =
 \lb \bH _{ \bA ^{ \dim X } } ( T ) \rb ^{ \bL ^{ - \dim X } \ld X \rd }
 =
 \lb \bH _{ \bA ^{ \dim Y } } ( T ) \rb ^{ \bL ^{ - \dim Y } \ld Y \rd }
 =
 \bH _{ Y } ( T )
 \in K _{ 0 } \lb \Var \rb \ld \bL ^{ - 1 } \rd \ldd T \rdd. 
\end{autobreak}
\end{align}
Comparing the coefficients of
$
 T ^{ n }
$,
we obtain the L-equivalence of
$
 X ^{ [ n ] }
$
and
$
 Y ^{ [ n ] }
$.
\end{proof}

Let us now specialize to the pair
$
 \lb X, Y \rb
$
as in \pref{th:main}. In the rest we assume
$
 n \ge 2
$
(for the case $n=1$ being trivial). Since we assumed that
$
 X
$
is of Picard number 1, we can and will use the results in \cite[Section 13]{MR3279532} to understand the movable cone of
$
 X ^{ [ n ] }
$.

\begin{proposition}\label{pr:main_prop}
Let $X$ and $Y$ be a pair of non-isomorphic K3 surfaces of Picard number 1 and of degree
$
 2 d _{ X }, 2 d _{ Y }
$
respectively. Then
$
 X ^{ [ n ] }
$
and
$
 Y ^{ [ n ] }
$
are not birationally equivalent if either
\begin{enumerate}[(1)]
\item\label{it:inequality}
$
 d _{ X } \neq d _{ Y }
$
or

\item\label{it:equality_revisited}
$
 d _{ X } = d _{ Y }
$,
$
 n >2
$,
and there exists an integer solution to the following Pell's equation.
\begin{align}\label{eq:Pell's(b)_revisited}
 \lb n - 1 \rb X ^{ 2 } - d _{ X } Y ^{ 2 } = 1
\end{align}
\end{enumerate}
\end{proposition}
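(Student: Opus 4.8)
The plan is to translate non-birationality into the non-existence of a certain isometry of N\'eron--Severi lattices, and then to rule that isometry out using the description of the movable cone in \cite{MR3279532}. Since $X^{[n]}$ and $Y^{[n]}$ are irreducible holomorphic symplectic, a birational map between them is an isomorphism in codimension $1$, so it induces an isometry of integral Hodge structures $g\colon H^{2}(X^{[n]},\bZ)\xrightarrow{\sim}H^{2}(Y^{[n]},\bZ)$ for the Beauville--Bogomolov forms which moreover carries $\overline{\mathrm{Mov}}(X^{[n]})$ onto $\overline{\mathrm{Mov}}(Y^{[n]})$. Taking $(1,1)$-parts and recalling that $\mathrm{NS}(X^{[n]})=\bZ H_{X}\oplus\bZ\delta_{X}\cong\langle 2d_{X}\rangle\oplus\langle -2(n-1)\rangle$ — where $H_{X}$ is the image of the ample generator of $\mathrm{NS}(X)$, so $H_{X}^{2}=2d_{X}$, and $2\delta_{X}$ is the class of the Hilbert--Chow exceptional divisor, so $\delta_{X}^{2}=-2(n-1)$ — this restricts to an isometry $\phi$ from $\mathrm{NS}(X^{[n]})$ onto $\mathrm{NS}(Y^{[n]})\cong\langle 2d_{Y}\rangle\oplus\langle -2(n-1)\rangle$ carrying $\overline{\mathrm{Mov}}(X^{[n]})$ to $\overline{\mathrm{Mov}}(Y^{[n]})$. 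In case \pref{it:inequality} no such $\phi$ can exist, because the two lattices have discriminant groups of orders $4d_{X}(n-1)$ and $4d_{Y}(n-1)$, which differ; this settles the first case.

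For case \pref{it:equality_revisited} put $d\coloneqq d_{X}=d_{Y}$ and fix the minimal positive integer solution $(x_{0},y_{0})$ of \pref{eq:Pell's(b)_revisited}. Then $v_{X}\coloneqq y_{0}H_{X}+x_{0}\delta_{X}$ has $v_{X}^{2}=-2$, and the class $w_{X}\coloneqq(n-1)x_{0}H_{X}+dy_{0}\delta_{X}$ spanning the rank-one lattice $v_{X}^{\perp}$ is primitive — a common prime factor of $(n-1)x_{0}$ and $dy_{0}$ would divide $(n-1)x_{0}^{2}-dy_{0}^{2}=1$ — and satisfies $w_{X}^{2}=2d(n-1)$. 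The input I would take from \cite[Section~13]{MR3279532} is that, under the present hypotheses, $\overline{\mathrm{Mov}}(X^{[n]})$ is the closed two-dimensional cone bounded by the ray spanned by $H_{X}$ — the Hilbert--Chow divisorial contraction — and the ray spanned by $w_{X}$; the identical description, with the same $d$ and $n$, holds for $Y$. Under the tautological identification $H_{X}\leftrightarrow H_{Y}$, $\delta_{X}\leftrightarrow\delta_{Y}$ the two movable cones become one cone $\mathcal C$ inside $\langle 2d\rangle\oplus\langle -2(n-1)\rangle$, so $\phi$ is identified with an isometry of $\langle 2d\rangle\oplus\langle -2(n-1)\rangle$ preserving $\mathcal C$. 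Any such isometry permutes the two extremal rays of $\mathcal C$; it cannot interchange them, because their primitive generators $H$ and $w$ have distinct squares $2d\neq 2d(n-1)$ — this is the single place where $n>2$ enters — so it fixes each extremal ray, hence fixes the linearly independent primitive vectors $H$ and $w$, hence is the identity. Therefore $\phi(H_{X})=H_{Y}$ and $\phi(\delta_{X})=\delta_{Y}$.

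It remains to derive a contradiction. Because $g(\delta_{X})=\delta_{Y}$, and $H^{2}(X^{[n]},\bZ)=H^{2}(X,\bZ)\oplus\bZ\delta_{X}$ as Hodge structures with $H^{2}(X,\bZ)=\delta_{X}^{\perp}$, the isometry $g$ restricts to a Hodge isometry $H^{2}(X,\bZ)\xrightarrow{\sim}H^{2}(Y,\bZ)$; by the Torelli theorem for K3 surfaces this forces $X\cong Y$, contrary to assumption. The part I expect to require genuine work is extracting from \cite{MR3279532} the precise shape of $\overline{\mathrm{Mov}}(X^{[n]})$ — in particular that the ray spanned by $w_{X}$, rather than a wall lying strictly inside the positive cone, is the extremal ray opposite the Hilbert--Chow ray; this is exactly where the solvability of \pref{eq:Pell's(b)_revisited} and the hypothesis $n>2$ are consumed. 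When there is no such solution, or when $n\le 2$, the opposite extremal ray is of a different type, an interchanging isometry can appear, and the argument genuinely breaks down — consistently with the birational identifications recorded in \pref{rm:finer}.
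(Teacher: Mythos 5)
Your proof is correct and, at the level of overall strategy, matches the paper's: a birational map between the Hilbert schemes is an isomorphism in codimension one, hence induces an isometry of Picard lattices respecting movable cones, and the contradiction is extracted from the Bayer--Macr\`{\i} description of $\mathrm{Mov}(X^{[n]})$ in \cite[Section 13]{MR3279532}. You diverge from the paper in two places, both in the direction of economy. For case (1) you never touch the movable cone: the N\'eron--Severi lattices $\langle 2d_X\rangle\oplus\langle -2(n-1)\rangle$ and $\langle 2d_Y\rangle\oplus\langle -2(n-1)\rangle$ have different determinants when $d_X\neq d_Y$, so no isometry exists at all. This is valid and shorter than the paper's route, which runs through the trichotomy (a)/(b)/(c) of \cite[Proposition 13.1]{MR3279532}, computes the Beauville--Bogomolov square of the second extremal ray in each case, and needs a ``without loss of generality $d_X\ge d_Y$'' to close case (b). Second, in the subcase where the isometry preserves the Hilbert--Chow ray, the paper invokes \cite[Theorem 2.1]{MR770463} to conclude that the birational map is induced by an isomorphism from $X$ to $Y$; you instead note that the isometry must send $\delta_X$ to $\delta_Y$, restrict it to $\delta^{\perp}\cong H^{2}(X,\bZ)$ as a Hodge isometry, and apply the Torelli theorem. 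Both are standard and correct. The subcase where the two extremal rays would be interchanged is handled identically in both proofs, by comparing the squares $2d$ and $2d(n-1)$ and using $n>2$. The one input you explicitly defer --- that under hypothesis (\ref{it:equality_revisited}) the second extremal ray really is the class $w_X$ of square $2d(n-1)$, i.e.\ that one is in case (b) of the trichotomy rather than the Lagrangian-fibration case (a) --- deserves the small check that solvability of $(n-1)X^{2}-dY^{2}=1$ forces $\gcd(n-1,d)=1$, so that if $d(n-1)$ were a perfect square then $d$ and $n-1$ would each be squares and the Pell equation would force $n=2$; with $n>2$ this excludes case (a), which is exactly what the paper's appeal to the trichotomy also requires.
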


\begin{proof}
Let us briefly recall the results in \cite[Section 13]{MR3279532}.
The Picard group
$
 \Pic \lb X ^{ [ n ] } \rb
$
is freely generated by the two divisors
$
 \Htilde
$
and
$
 B
$,
where
$
 \Htilde
$
is the pull-back of the ample generator of
$
 \Pic \lb \Sym ^{ n } X \rb
$
by the Hilbert-Chow morphism,
and
$
 B
$
is the half of the exceptional divisor.
Moreover there exists a primitive embedding
\begin{align}
 \Pic \lb X ^{ [ n ] } \rb \hookrightarrow H ^{ * } \lb X, \bZ \rb,
\end{align}
where
$
 H ^{ * } \lb X, \bZ \rb
 =
 H ^{ 0 } \lb X, \bZ \rb \oplus \Pic \lb X \rb \oplus H ^{ 4 } \lb X, \bZ \rb
$
is the Mukai lattice of $X$ equipped with the Mukai pairing
\begin{align}
 \lb r, L, s \rb \cdot \lb r ', L ', s ' \rb
 =
 L L ' - r s ' - s r ' \in \bZ.
\end{align}
The embedding is an isometry with respect to this pairing and
the Beauville-Bogomolov-Fujiki form
$
 q
$
on
$
 \Pic \lb X ^{ [ n ] } \rb
$, and the embedding sends
$
 \Htilde
$
to
$
 \lb 0, - H, 0 \rb
$
and
$
 B
$
to
$
 \lb - 1, 0, 1 - n \rb
$.

As explained in \cite[Proposition 13.1]{MR3279532},
there are three possibilities (a), (b), and (c) for the two extremal rays of the movable cone
$
 \Mov \lb X ^{ [ n ] } \rb
$.
In any case, one of the rays is spanned by the primitive vector
$
 \Htilde
$
corresponding to the Hilbert-Chow morphism.

Suppose for a contradiction that there exists a birational map
$
 \varphi \colon X ^{ [ n ] } \dasharrow Y ^{ [ n ] }
$.
Since both
$
 X ^{ [ n ] }
$
and
$
 Y ^{ [ n ] }
$
are smooth and have trivial canonical bundles,
$
 \varphi
$
is an isomorphism in codimension one. Hence by \cite[Lemma 2.6]{MR1664696} it induces an isometry
\begin{align}\label{eq:the_isometry}
 \varphi _{ * } \colon
 \lb \Pic \lb X ^{ [ n ] } \rb, q _{ X } \rb \simto
 \lb \Pic \lb Y ^{ [ n ] } \rb, q _{ Y } \rb,
\end{align}
which by its construction also respects the movable cones. 
In particular
$
 \varphi _{ * } ^{ - 1 }
$
sends the base point free divisor
$
 \Htilde _{ Y }
$,
the primitive ample divisor on
$
 Y ^{ [ n ] }
$
corresponding to the Hilbert-Chow morphism of $Y$, to either
\begin{enumerate}[(i)]
\item\label{it:nef_cone_preserving}
$
 \Htilde
$
or
\item\label{it:nef_cone_not_preserved}
the primitive generator of the other extremal ray
$
 \rho
$
 of
$
 \Mov \lb X ^{ [ n ] } \rb
$.
\end{enumerate}
In the case (\pref{it:nef_cone_preserving}), the birational map
$
 \varphi
$
respects the exceptional divisors of the Hilbert-Chow morphisms of $X$ and $Y$. Hence by \cite[Theorem 2.1]{MR770463},
$
 \varphi
$
should be induced from an isomorphism from $X$ to $Y$ (note that any birational map between $X$ and $Y$ is an isomorphism). Since
$
 X
$
and
$
 Y
$
are not isomorphic to each other by the assumption, this is a contradiction.

In the rest of the proof we assume (\pref{it:nef_cone_not_preserved}) and show that we end up with a contradiction,
to conclude that
$
 Y ^{ [ n ] }
$
is birationally inequivalent to
$
 X ^{ [ n ] }
$.
Let us now assume
\begin{align}\label{eq:the_inequality}
 d _{ X } \ge d _{ Y }
\end{align}
without loss of generality.

In the case (a),
$
 \rho
$
corresponds to the (rational) Lagrangian fibration of
$
 X ^{ [ n ] }
$. Hence this case can not occur under our assumptions.

In the case (b),
$
 \rho
$
is spanned by the integral divisor
\begin{align}\label{eq:ext_ray_(b)}
 x _{ 1 } \lb n - 1 \rb \Htilde - d _{ X } y _{ 1 } B,
\end{align}
where
$
 x _{ 1 }, y _{ 1 } > 0
$
is the integer solution of the Pell's equation \eqref{eq:Pell's(b)_revisited} with the smallest
$
 x _{ 1 }
$.
It is easy to see that
$
 \gcd \lb x _{ 1 } \lb n - 1 \rb, d _{ X } y _{ 1 } \rb = 1
$, since otherwise
$
 \lb x _{ 1 }, y _{ 1 } \rb
$
can not be a solution of \eqref{eq:Pell's(b)}.
Hence \eqref{eq:ext_ray_(b)} is the primitive generator of
$
 \rho
$.
Now since \eqref{eq:the_isometry} is an isometry, we obtain the following equality.
\begin{align}
\begin{autobreak}
 2 d _{ Y }
 =
 q _{ Y } \lb \Htilde _{ Y }, \Htilde _{ Y } \rb
 =
 q _{ X } \lb x _{ 1 } \lb n - 1 \rb \Htilde - d _{ X } y _{ 1 } B, x _{ 1 } \lb n - 1 \rb \Htilde - d _{ X } y _{ 1 } B \rb
 =
 \lb x _{ 1 } \rb ^{ 2 } \lb n - 1 \rb ^{ 2 } \lb 2 d _{ X } \rb - \lb d _{ X } \rb ^{ 2 } \lb y _{ 1 } \rb ^{ 2 } \lb 2 \lb n - 1 \rb \rb
 = 2 d _{ X } \lb n - 1 \rb \lb \lb n - 1 \rb \lb x _{ 1 } \rb ^{ 2 } - d _{ X } \lb y _{ 1 } \rb ^{ 2 } \rb
 = 2 d _{ X } \lb n - 1 \rb.
\end{autobreak}
\end{align}
For the last equality, we use that
$
 \lb x _{ 1 }, y _{ 1 } \rb
$
is a solution of the Pell's equation \eqref{eq:Pell's(b)_revisited}. Since it follows from \eqref{eq:the_inequality} that
$
 2 d _{ X } \lb n - 1 \rb \ge 2 d _{ Y }
$,
we should have
$
 n = 2
$
and
$
 d _{ X } = d _{ Y }
$.
This contradicts both of the assumptions (\ref{it:inequality}) and (\ref{it:equality_revisited}). Since the existence of a solution to the Pell's equation is assumed in the case (\ref{it:equality_revisited}), here we conclude the proof in that case because of the trichotomy in \cite[Proposition 13.1]{MR3279532}.

Finally, suppose that we are in the case (c) under the assumption (\ref{it:inequality}). Then
$
 \rho
$
is spanned by the integral divisor
\begin{align}\label{eq:primitive_generator_(c)}
 x ' _{ 1 } \Htilde - y ' _{ 1 } d _{ X } B,
\end{align}
where
$
 x ' _{ 1 }, y ' _{ 1 }
$
is the integer solution of the Pell's equation
\begin{align}
 X ^{ 2 } - d _{ X }\lb n - 1 \rb Y ^{ 2 } = 1
\end{align}
with the smallest
$
 \frac{ y ' _{ 1 } }{ x ' _{ 1 } } > 0
$.
One can easily check as in the case (b) that
\eqref{eq:primitive_generator_(c)} is the primitive generator of
$
 \rho
$. Thus we obtain the following contradiction.
\begin{align}
\begin{autobreak}
 2 d _{ Y }
 =
 q _{ Y } \lb \Htilde _{ Y }, \Htilde _{ Y } \rb
 =
 q _{ X }
 \lb x ' _{ 1 } \Htilde - y ' _{ 1 } d _{ X } B, x ' _{ 1 } \Htilde - y ' _{ 1 } d _{ X } B \rb
 =
 2 d _{ X }.
\end{autobreak}
\end{align}
\end{proof}

\begin{remark}\label{rm:there_are_birational_examples}
If
$
 d _{ X } = d _{ Y }
$
and there is no solution to the Pell's equation \eqref{eq:Pell's(b)_revisited}, then we are in the case (c) but get no contradiction. In fact this does occur, e.g., when $n = 3$ and
$
 \lb X, Y \rb
$
is a Fourier-Mukai pair of K3 surfaces of degree 12 as mentioned in \pref{rm:finer}.
\end{remark}

\bibliographystyle{amsalpha}
\bibliography{mainbibs}
\end{document}